\documentclass[12pt]{article}


%
\usepackage{amsmath}
\usepackage{amsmath}
\usepackage{verbatim}
\usepackage{amssymb}
\usepackage{latexsym}
\usepackage{latexsym}
\usepackage{mathtools}
\usepackage{amsthm}

\usepackage{color}
\usepackage{mathabx}
\definecolor{green}{rgb}{0,1,0}
\usepackage{cancel}

\usepackage{url}
\usepackage{xcolor}
\definecolor{linkColor}{RGB}{156,78,13}
\usepackage{hyperref}
\hypersetup{colorlinks,allcolors=linkColor}

\hypersetup{
           breaklinks=true,   
           colorlinks=true,   
           pdfusetitle=true,  
        }
\begin{document}
\newfont{\blb}{msbm10 scaled\magstep1} 

\newtheorem{Theorem}{Theorem}[section]
\newtheorem{defi}[Theorem]{Definition}
\newtheorem{Remark}[Theorem]{Remark}
\newtheorem{prop}[Theorem]{Proposition}
\newtheorem{lemm}[Theorem]{Lemma}
\newtheorem{coro}[Theorem]{Corollary}

\pagestyle{myheadings}
\date{}
%
%
\makeatletter
\def\@fnsymbol#1{\ensuremath{\ifcase#1\or
    *\or
    \dagger\or
    \ddagger\or
    \mathsection\or
    {\vdash}\or
    \else\@ctrerr\fi}}
\makeatother
\title{\fontfamily{lmss}\selectfont  On minimal \\
symplectic alternating algebras}
\author{
   Layla Sorkatti~\thanks{Southern Illinois University, USA; University of Khartoum, Al Neelain University, Khartoum, Sudan. 
   layla.sorkatti(at)bath.edu},
\"{O}zlem U\u{g}urlu ~\thanks{Saint Louis University, USA, ozlem.ugurlu(at)slu.edu},
Manisha Varahagiri~\thanks{University of Texas at Arlington, USA, manishav(at)uta.edu}
}
\maketitle
\begin{abstract}
\noindent
The structure of nilpotent symplectic algebras of maximal class has been studied in~\cite{sor-thesis, nsaa1}.
In this paper, we focus on the dual subclass of algebras of minimal class. In particular, we show that symplectic alternating algebras of dimension up to $16$ that are minimal, in the sense that they are of rank $2$ with minimum nilpotency class, have a class that confirm a conjecture that has been raised in~\cite{sor-2022}.
\mbox{}\\
\end{abstract}
{\small Keywords: Non-associative algebras; symplectic; nilpotent; alternating; Engel}.\\\\
\noindent Mathematics Subject Classification 2020: 17D99, 20F45 \\
%
\section{Introduction}
\noindent
Symplectic alternating algebras originate from the study of classifying powerful $2$-Engel groups of class $3$ that have the property that all powerful proper subgroups are nilpotent of class $2$ ~\cite{moravec,gt-2008}
The motivation for this work comes from well-known question raised by Caranti~\cite{unsolved} as follows:
{
\begin{itemize}
 \item[] Does there exist a finite 2-Engel 3-group of class 3 such that \( \mbox{Aut}\ G = \mbox{Aut}_c\ G \cdot \mbox{Inn}\ G \) (where \(\mbox{Aut}_c \ G\) denotes the central automorphisms of G)?
\end{itemize}}

\noindent
The study reveals that there are infinitely many minimal groups of rank $5$ and of any even rank \(\geq 4 \). One of the families considered (of rank $5$ and exponent $27$) has a richer structure that leads to a related algebraic structure that we call \textquoteleft \textquoteleft symplectic alternating algebras\textquoteright\textquoteright. Further, there is a one-to-one correspondence between symplectic alternating algebras over the field of three elements and a certain rich class \( \mathcal{C}\) of powerful $2$-Engel $3$-group of exponent $27$. Namely, the groups form the class \( \mathcal{C}\) consist of all powerful $2$-Engel $3$-groups \(G\) with the following extra properties:
{
\begin{itemize}
    \item[1)] \( G = \langle x, H \rangle \), where \(H  = \{ g \in G \colon g^9=1 \} \) and \( Z(G) = \langle x \rangle \) with \( O(x)=27\),
    \item[2)] \(G\) is of rank \(2r+1\) and has order \(3^{3+4r}\). \\
\end{itemize}}

\noindent
There is another important subclass of \( \mathcal{C}\) that was introduced by Sorkatti and Traustason in~\cite{sor-thesis, nsaa2}. This subclass consists of all groups in \( \mathcal{C}\) that has an additional group-theoretical property that we call {\it powerfully nilpotent}. 
In fact, Traustason showed in~\cite{gt-2008} that \(G \cong H \) in \( \mathcal{C}\) if and only if the associated symplectic alternating algebras \(L(G) \cong L(H)\). A brief overview of the theory of symplectic alternating algebras can be found in ~\cite{survay}.\\\\

\noindent
Symplectic alternating algebras form a class of non-associative algebras that are also of interest in their own right, with many beautiful properties. A symplectic alternating algebra (SAA) \(L\) over a field is a vector space $L$ associated with a non-degenerate alternating form \( ( \ , \ )\) that is further equipped with a binary alternating product `\( \ \cdot \ \)' with the additional requirement that \[ (x \cdot y, z) = ( y  \cdot z, x), \]
for all \( x, y, x \in L\). This condition can be equivalently expressed by saying that $( u \cdot x , v) = (u, v \cdot x)$ for all $u, v, x \in L$ or in other words, that multiplication from the right is self-adjoint with respect to the alternating form.\\ \\
We can always choose a basis $x_1, y_1, \ldots, x_n, y_n$ of $L$ such that $(x_i, x_j)= (y_i, y_j)=0$ and $(x_i, y_j) = \delta_{ij}$ for $1 \leq i \leq j \leq k \leq n$. A basis with this property is called a \emph{standard basis}.
A \emph{presentation} for $L$ on any basis 
$u_1, \ldots, u_{2n}$ is determined by 
\[ (u_i u_j, u_k)= \gamma_{ijk}, \quad 1 \leq i <  j  <  k \leq 2n,\]
where by convention one list only those triples whose values are not zero.

\noindent
Alternatively we can describe $L$ as follows: if we take the two isotropic subspaces $\mathbb{F} x_1+ \cdots + \mathbb{F} x_n$ and $\mathbb{F} y_1+ \cdots + \mathbb{F} y_n$ with respect to a given standard basis, then it suffices to write down only the products $x_ix_j, y_iy_j,$ $1 \leq i < j \leq n$. The reason for this is that having determined these products, we have determined all the triples $ (u_i u_j, u_k)$ where $ 1 \leq i < j < k \leq 2n$, since two of them are either some $x_i, x_j$ or some $y_i, y_j$ in which case the triple is determined from $x_ix_j$ or $y_iy_j$. Since $(x_i x_j, x_k) = (x_j x_k, x_i) = (x_k x_i, x_j)$ and $(y_i y_j, y_k) = (y_j y_k, y_i) = (y_k y_i, y_j)$, this puts additional conditions on the products $x_ix_j$ and $y_iy_j$. \\

\noindent
Classifying symplectic alternating algebras of dimension $2n$ over a field $\mathbb{F}$ is equivalent to finding all the $\mbox{Sp(V)}$-orbits of $\bigwedge^3 V$ under the natural action, where $V$ is a symplectic vector space of dimension $2n$ with a non-degenerate alternating form. In particular, it has been shown in a joint work with Trautason in~\cite{nsaa1} that the number of symplectic alternating algebras over a finite field is $|\mathbb{F}|^{\frac{4}{3}n^2 +O(n^2)}$. Because of this sheer growth, a general classification of symplectic alternating algebras seems impossible. It is clear that the only symplectic alternating algebra of dimension $2$ is abelian, and when the dimension is $4$ we obtain a non-trivial symplectic alternating algebra apart from the abelian one. Traustason in~\cite{saa} classified the symplectic alternating algebras over $\mbox{GF}(3)$ of dimension up to $6$. There are $31$ such algebras of dimension $6$, of which $15$ are simple. Further, it has been shown in~\cite{saa} that a well-known dichotomy property for Lie algebras also holds for symplectic alternating algebras; namely a symplectic alternating algebra is either semi-simple or has a non-trivial abelian ideal.\\

\noindent
Many of the notions that will be used are analogous to the corresponding notions for related structures. The definition of a nilpotent symplectic alternating algebra causes no problem either and it is just as we expect it to be.\\\\

\noindent
{\bf Definition}. A symplectic alternating algebra $L$ is {\it nilpotent} if there 
exists an ascending
chain of ideals $I_{0},\ldots, I_{n}$ such that 
    $$\{0\}=I_{0}\leq I_{1}\leq \cdots \leq I_{n}=L$$
and $I_{s}L\leq I_{s-1}$ for $s=1,\ldots ,n$. 
The smallest possible $n$ is then called the {\it nilpotence class} of $L$. \\ \\

\noindent
More generally, if $I_{0}\leq I_{1}\leq \ldots \leq I_{n}$ is any chain of ideals of $L$ then we say that this chain is central in $L$ if $I_{s}L\leq I_{s-1}$ for $s=1,\ldots ,n$.

\noindent
We define the lower and upper central series in a similar way to related structures like associative algebras and Lie algebras. Thus we define the lower central series recursively by 
\[ L^1=L  \text{ and } L^{i+1}=L^i L, \]
\noindent
and the upper central series by 
\[ Z_0(L)=\{0\} \text{ and } Z_{i+1}(L)=\{ x \in L : xL \in Z_i(L) \}.\]

\noindent
It is readily seen that the terms of the lower and the upper central series are all ideals of $L$. Traustason in~\cite{saa} proved the following dual relation between the upper and the lower central series:
\begin{align}\label{nice-ppty}
    Z_i(L)=(L^{i+1})^\perp
\end{align}

\noindent
We can introduce the notion of rank since all minimal sets of generators have the same number of elements.\\\\
Let L be a nilpotent symplectic alternating algebra. We say
that $\{x_1 , \ldots , x_r \}$ is a minimal set of generators if these generate $L$ (as an
algebra) and no proper subset of it does it.

\noindent
For nilpotent symplectic alternating algebras, the size of the minimal set of generators is $\text{dim}\ L - \text{dim}\ L^2 = $ \text{dim}\  Z(L)~\cite{nsaa1}.\\\\
{\bf Definition}. This number is defined to be the {\it rank} of $L$ and is denoted $r(L)$.\\\\
Thus, for any a nilpotent symplectic alternating algebra $L$ we have
$$r(L) = \text{dim}\ Z(L).$$
From the duality (\ref{nice-ppty}) and the general fact that for any nilpotent alternating algebra $L$ of dimension greater than one that $r(L)$ can't be equal to one.
It follows that any non-trivial nilpotent symplectic alternating algebra $L$ has the property that the dimension of the center is at least $2$. \\\\

\noindent
The algebras that are of maximal class turn out to have a rigid structure, and for every algebra of dimension $2n\geq 8$, this bound is attained. Further, there is a unique ideal of dimension $k$ for any $0 \leq k \leq 2n$ apart from $k =1, k =n$ and $k =2n - 1$. In particular,
\[  L^k = {Z_{k-1}(L)}^\perp  =  Z_{2n-k-2}(L), \]
for $0 \leq k \leq 2n-3$. More interestingly, It turns out that there are as well always characteristic ideals of dimension $1, n$ and $2n - 1$ when $2n \geq 10$ ( see~\cite{sor-thesis}, Theorem 3.27). Explicitly, let $L$ be a nilpotent symplectic alternating algebra of dimension $2n\geq 8$ with an ascending chain of isotropic ideals

    $$\{0\}=I_{0}<I_{1}<\cdots <I_{n},$$ 
where $\mbox{dim\,}I_{j}=j$ for $j=1,\ldots ,n$. In ~\cite{sor-thesis} we proved that for the algebra $L$ that is of maximal
class. We have that

    $$I_{2}=Z_{1}(L),\ I_{3}=Z_{2}(L),\ \ldots ,I_{n-1}=Z_{n-2}(L),$$
   $$I_{n-1}^{\perp}=Z_{n-1}(L),\ I_{n-2}^{\perp}=Z_{n}(L),\ \cdots, I_{2}^{\perp}=Z_{2n-4}(L).$$

\noindent
Furthermore $Z_{0}(L), Z_{1}(L), \ldots ,Z_{2n-3}(L)$ are the unique ideals
of $L$ of dimensions $0, 2, 3, \ldots , n-1, n+1, n+2, \ldots , 2n-2, 2n$. 
In particular when $2n \geq 10$ we can choose our chain of ideals above such that they are all characteristic. It turns out that $I_0, I_2, I_3, \ldots, I_{n-1}, I_{n-1}^\perp, I_{n-2}^\perp,$ $ \ldots, I_0^\perp$ are unique and equal to both the terms of the lower and upper central series (see ~\cite{sor-thesis}, Theorem $3.2$). The nilpotent algebras of maximal class can be identified easily from their presentations. In fact, if ${\mathcal P}$ is any 
presentation of $L$ with respect to a standard basis $\{ x_1, y_1, \ldots, x_n, y_n \}$, and $2n \geq 8$, then $L$ is of maximal class if and only if $x_i y_{i+1} \neq 0$ for all $i = 2, \ldots, n-2$, and $x_1y_2, y_1y_2$ are linearly independent (see ~\cite{sor-thesis}, Theorem $3.4$).\\\\

\noindent
Moreover, we obtain some information about the growth of nilpotent symplectic alternating algebras: It is too large and thus a general classification for this subclass seems to be impossible. In fact Sorkatti and Traustason showed that if $\mathcal{N}_n (\mathbb{F})$ is the number of nilpotent symplectic alternating algebras of dimension $2n$ over a finite field $\mathbb{F}$ then $\mathcal{N}_n(\mathbb{F}) = |\mathbb{F}|^{n^3 /3+O(n^2 )}$~\cite{nsaa1}. 
Clearly we have that $\mathcal{N}_0(\mathbb{F}) = \mathcal{N}_1(\mathbb{F}) = \mathcal{N}_2(\mathbb{F})=1$, $\mathcal{N}_3(\mathbb{F})=2$, and $\mathcal{N}_4(\mathbb{F})=3$. For the next higher dimensions the classification is already challenging. When $\mathbb{F} = GF(3)$, we have $\mathcal{N}_5(\mathbb{F})=25$ while $\mathcal{N}_5(\mathbb{F})=22$ over a field $\mathbb{F}$ that is algebraically closed. Thus, we have $25$ powerfully nilpotent groups of rank $11$ in class \( \mathcal{C}\)~\cite{sor-thesis, nsaa2}.
Surprisingly, the growth of the subclass of nilpotent symplectic alternating algebras of dimension $2n$ over a finite field $\mathbb{F}$ that is furthermore of maximal class is again $|\mathbb{F}|^{n^3 /3+O(n^2 )}$.\\\\

\noindent
Now, let $\mathcal{N}^2_n(\mathbb{F})$ be the set of nilpotent symplectic alternating algebras of dimension $2n$ over a finite field $\mathbb{F}$ that have rank 2. We can define the two functions $F (n) = \text{max}\, \{ \text{class}(L) \colon L \in
\mathcal{N}^2_n(\mathbb{F})\}$ and $f(n) = \text{min} \, \{ \text{class}(L) \colon L \in \mathcal{N}^2_n(\mathbb{F})\}$. 
In ~\cite{nsaa1}, it was proved that $F(n)=2n-3$ for any nilpotent symplectic alternating algebras of dimension $2n \geq 8$.
It comes as a natural question to consider the funtion $f(n)$ and investigate the lower bound of the nilpotency class for the finite dimensional algebras of rank $2$ over a field $\mathbb{F}$. 

\noindent
This motivates the following questions \\\\
{\bf Question A}~\cite{sor-2022}. What can we say about the structure of symplectic alternating algebras that are minimal in the sense that they are of ranks $2$ with minimal nilpotency class? \\\\

\noindent
The growth of nilpotent symplectic alternating algebras is the same as the growth of nilpotent symplectic alternating algebras that are of maximal class. Because of the sheer growth, one might therefore expect that all the examples of minimal class would disappear if we add the condition that rank $L$ is two. Surprisingly, this is not the case in general.\\\\
\noindent
{\bf Question B}~\cite{sor-2022}. 
Does the class of nilpotent symplectic alternating algebras of rank $2$ and dimension $2n \geq 8$ over any field have at least $2m+1$ or $2m+2$, where $m$ is a certain non-negative integer?\\\\ 

\noindent
Our main result is the following theorem.
\begin{Theorem} Let $\mathbb{F}$ be a field. Let $L$ be any minimal symplectic alternating algebra that is of dimension $2n \leq  16$ over $\mathbb{F}$. Then $L$ has nilpotency class $\leq 7$ that is same as the class given in Question B.
\end{Theorem}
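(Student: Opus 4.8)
The plan is to reduce the statement to a controlled analysis of the lower central series of a rank-$2$ algebra and then to settle the relevant dimensions $2n \in \{8,10,12,14,16\}$ by matching a structural lower bound with explicit constructions (the dimensions $2n \le 6$ reduce to the already classified small cases). Write $d_i = \dim L^i - \dim L^{i+1}$ for the successive drops, so that the nilpotency class $c$ is the largest index with $d_c \neq 0$ and $\sum_{i=1}^{c} d_i = 2n$. Since $r(L)=2$ means $\dim Z(L)=\dim L - \dim L^2 = 2$, we have $d_1 = 2$, and because $L^2/L^3$ is spanned by the single product of the two generators we also have $d_2 = 1$. The task is therefore to find, for each $n \le 8$, the smallest $c$ for which a valid drop sequence $(d_1,\dots,d_c)$ can be realised by an SAA, and to check that this minimum equals the value predicted in Question~B and is at most $7$.

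For the lower bound I would exploit two ingredients simultaneously. First, the duality $Z_i(L) = (L^{i+1})^\perp$ gives $\dim Z_i(L) = \sum_{k \le i} d_k$ and $\dim L^{i+1} = \sum_{k>i} d_k$, so the upper central jumps coincide with the lower central drops; this couples the two ends of the series and, together with the self-adjointness $(ux,v)=(u,vx)$ of right multiplication, imposes a symmetry on the admissible sequences $(d_i)$. Second, since $L$ is generated by two elements, each graded piece $L^{i+1}/L^{i+2}$ is spanned by the products of the generators with $L^i/L^{i+1}$, of $L^2/L^3$ with $L^{i-1}/L^i$, and so on; this yields explicit upper bounds on how fast the $d_i$ can grow starting from $d_1=2$, $d_2=1$. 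Feeding these growth bounds into the constraint $\sum_i d_i = 2n$ forces $c$ to be at least the conjectured value: intuitively the drops can at best grow up to a peak and must then decrease symmetrically, so exhausting $2n$ dimensions costs on the order of $2m$ steps, where $m$ measures how many doublings are needed to accommodate $n$, which is exactly the $2m+1$ or $2m+2$ of Question~B.

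For the upper bound I would, for each $n \in \{4,5,6,7,8\}$, exhibit an explicit rank-$2$ SAA of dimension $2n$ attaining the predicted class. Following the description in the introduction it suffices to record the products $x_ix_j$ and $y_iy_j$ on a standard basis $x_1,y_1,\dots,x_n,y_n$; one then checks that the induced triples $(u_iu_j,u_k)$ are consistent with the cyclic identities $(x_ix_j,x_k)=(x_jx_k,x_i)$ and their $y$-analogues, that the alternating form remains non-degenerate, and that two chosen elements generate $L$. Computing the lower central series of each presentation then confirms that its class realises $f(n)$ and equals $2m+1$ or $2m+2$ as in Question~B, with all values lying in the range $5,\dots,7$ for $n \le 8$.

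I expect the main obstacle to be the lower bound rather than the constructions. The naive growth estimates coming from the $2$-generated structure alone permit drop sequences that grow too fast, and hence classes that are too small; ruling these out requires showing that the rigidity imposed by the non-degenerate form and the adjoint identity forces the symmetric, slower growth, and this is precisely what pins $f(n)$ to the Question~B value instead of something smaller. The delicate cases are the largest dimensions $2n=14$ and $2n=16$, where several a priori drop patterns compete and one must verify, using the cyclic relations among the triples, that the faster patterns cannot be completed to a genuine symplectic alternating algebra while the extremal construction can.
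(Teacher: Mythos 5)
Your overall architecture (a structural lower bound on the nilpotency class combined with explicit realising presentations) matches the paper's, and your bookkeeping $d_1=2$, $d_2=1$ is correct. But the lower bound for dimensions $12$, $14$, $16$ --- which you yourself flag as ``the main obstacle'' --- is exactly the part you leave unresolved, and the ingredients you list do not close it. The growth inequality for the upper central series gives $d_3\le 2$ and $d_4\le 7$, and the duality $Z_i(L)=(L^{i+1})^\perp$ only identifies the lower central drops with the upper central jumps at the \emph{same} index; it does not by itself force the ``symmetric decrease'' you invoke. Numerically, a drop sequence such as $(2,1,2,4,1,2)$ sums to $12$ and passes every constraint you state, so class $6$ in dimension $12$ is not excluded by your argument.

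The missing idea is a single, concrete one: if the class is $k=6$, then $(L^4,L^4)=(L^5,L^3)=(L^6,L^2)=(L^7,L)=0$ by the cyclic identity, so $L^4$ is isotropic and hence $\dim L^4\le n$. On the other hand, writing $L=\mathbb{F}y_n+\mathbb{F}y_{n-1}+L^2$ and $L^2=\mathbb{F}\,y_ny_{n-1}+L^3$ gives $\dim L^3=2n-3$ and $\dim L^4\ge 2n-5$. For $2n\ge 12$ these two bounds collide ($2n-5>n$), killing class $6$ at once for dimensions $12$, $14$ and $16$; a shorter version of the same duality argument kills class $5$. This is the paper's key technical lemma, and without it (or an equivalent use of the isotropy of the middle term of the lower central series) your lower bound does not go through. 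Your anticipated case-by-case struggle with ``competing drop patterns'' in dimensions $14$ and $16$ is therefore unnecessary once this lemma is in place, but some analogue of it must be supplied. The upper-bound half of your plan is sound in outline but incomplete as written: the statement is only proved once the three presentations in dimensions $12$, $14$, $16$ (and the classification facts for dimensions $8$ and $10$) are actually exhibited and their classes computed.
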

\noindent
The outline of the paper is as follows. 
Section $2$ is devoted to some necessary and crucial results from the theory of symplectic algebras. 
The main technical result of the paper concerns the algebras of nilpotency class $5$ and $6$, and is proved in Section $3$.
Finally, in Section $4$, we prove the main theorem. 
Our result thus confirms the conjecture given in the above Question B for algebras of dimension up to $16$ over any field.\\

\section{Lower bounds for nilpotency classes}
Before embarking into the necessary material needed as a preparation to prove Theorem 1.1,
we first see the sufficient condition to have a symplectic alternating algebra~\cite{saa}.
Suppose that $V$ is a symplectic vector space with a non-degenerate alternating form $(\ ,\ )$. Let $x_1 , \ldots , x_{2n}$ be a standard basis of $V$, that is $(x_i, x_j)= (y_i, y_j)=0$ and $(x_i, y_j) = \delta_{ij}$ for $1 \leq i \leq j \leq k \leq n$. 
\begin{lemm} Let $n \geq 2$ and for each $(i,j,k)$, $ 1 \leq i < j < k \leq 2n$, choose a number $\alpha(i,j,k)$ in the field $F$. There is a unique SAA of dimension $2n$ over $F$ satisfying $$ (x_i x_j, x_k) = \alpha(i,j,k)$$ for $1 \leq i < j <k \leq 2n.$
\end{lemm}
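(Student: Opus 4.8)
The plan is to construct the algebra directly from the prescribed structure constants $\alpha(i,j,k)$ by specifying the product on the standard basis and then verifying that the resulting bilinear product is alternating and satisfies the symmetry identity $(u\cdot x,v)=(u,v\cdot x)$. First I would extend the given data to a fully antisymmetric array: for each triple of distinct indices define $\alpha(\sigma(i),\sigma(j),\sigma(k))=\operatorname{sgn}(\sigma)\,\alpha(i,j,k)$ whenever $i<j<k$, and set $\alpha(i,j,k)=0$ whenever two of $i,j,k$ coincide. Then, since $(\ ,\ )$ is non-degenerate, for each pair $(i,j)$ there is a unique vector $x_i\cdot x_j\in V$ determined by the requirement $(x_i\cdot x_j,x_k)=\alpha(i,j,k)$ for all $k$; concretely, writing the standard basis as $x_1,y_1,\dots,x_n,y_n$ and using $(x_a,y_b)=\delta_{ab}$, one reads off the coordinates of $x_i\cdot x_j$ explicitly in terms of the $\alpha$'s. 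Extend this product bilinearly to all of $V\times V$.

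The verification then splits into three routine checks. First, the product is well-defined and bilinear by construction. Second, it is alternating: $x_i\cdot x_i=0$ because $(x_i\cdot x_i,x_k)=\alpha(i,i,k)=0$ for all $k$ and $(\ ,\ )$ is non-degenerate, and then $u\cdot u=0$ for general $u$ follows by bilinearity together with $x_i\cdot x_j=-x_j\cdot x_i$ (which holds since $\alpha(i,j,k)=-\alpha(j,i,k)$). Third, the self-adjointness identity $(u\cdot x,v)=(u,v\cdot x)$: by trilinearity it suffices to check it on basis vectors, i.e. to verify $(x_i\cdot x_j,x_k)=(x_i,x_k\cdot x_j)$ for all $i,j,k$, and using $(x_i,x_k\cdot x_j)=-(x_k\cdot x_j,x_i)=-\alpha(k,j,i)=\alpha(i,j,k)$ this is exactly the total antisymmetry of $\alpha$. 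Hence the constructed product makes $V$ into an SAA realizing the prescribed values.

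For uniqueness, suppose $L$ is any SAA of dimension $2n$ with $(x_i\cdot x_j,x_k)=\alpha(i,j,k)$ for $1\le i<j<k\le 2n$. Using the alternating property of the form and of the product, together with the identity $(x\cdot y,z)=(y\cdot z,x)$, one shows that the value $(x_i\cdot x_j,x_k)$ is determined for \emph{all} triples $(i,j,k)$ — not just the increasing ones — since permuting the three entries only multiplies the value by the sign of the permutation, and repeated indices force the value to be $0$ (because the form and product are alternating). Since $(\ ,\ )$ is non-degenerate, knowing $(x_i\cdot x_j,x_k)$ for all $k$ pins down $x_i\cdot x_j$ uniquely, and bilinearity then pins down the whole product. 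Thus any two such algebras coincide.

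The only mildly delicate point — and the one I would write out most carefully — is the bookkeeping in the symmetry check: one must confirm that the single identity $(x\cdot y,z)=(y\cdot z,x)$ for an \emph{alternating} product is genuinely equivalent to full $S_3$-antisymmetry of the array $(x_i\cdot x_j,x_k)$, and in particular that no further independent constraints are hiding among the cases where indices repeat or among the two isotropic halves of the basis. This is the content already sketched in the paragraph of the excerpt explaining why it suffices to record only the products $x_ix_j$ and $y_iy_j$, so the argument is short, but it is the step where an error would most easily creep in; everything else is a direct consequence of non-degeneracy and multilinearity.
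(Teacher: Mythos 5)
Your construction is correct and is essentially the standard argument behind this result (which the paper simply cites from the literature rather than proving): extend $\alpha$ to a fully antisymmetric array, use non-degeneracy of the form to solve for each $x_i\cdot x_j$, and observe that cyclic invariance $(xy,z)=(yz,x)$ together with $xy=-yx$ is exactly $S_3$-antisymmetry of the trilinear form $(uv,w)$, with repeated indices handled by the vanishing you note. Both existence and uniqueness are covered, so there is no gap.
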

\noindent
Next, we state one of the main crucial results that provide a great deal about the structure of the nilpotent subclass.
\begin{prop}
Let $L$ be a nilpotent SAA of dimension $2n \geq 2$. There exists an ascending chain of isotropic ideals
\[\{0\}=I_0 < I_1 < \cdots < I_{n-1} < I_n \]
such that $\mbox{dim\,} I_r = r$ for $r = 0, \ldots, n$. 
Furthermore, for $2n \geq 6$, $I_{n-1}^\perp$ is abelian and the ascending chain 
\[\{0\} < I_2 < I_3 < \cdots < I_{n-1} < I_{n-1}^\perp < I_{n-2}^\perp  < \cdots< I_2^\perp < L \]
is a central chain. 
In particular, $L$ is nilpotent of class at most $2n-3$.
\end{prop}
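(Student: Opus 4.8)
The plan is to derive all three assertions from a single structural fact about the algebra of right multiplications. Let $\mathcal{R}\subseteq \mathrm{End}(L)$ denote the associative subalgebra generated by the right multiplications $R_g\colon v\mapsto v\cdot g$, $g\in L$. Two observations drive everything. First, each $R_g$ is self-adjoint for the form, since $(u\cdot g,v)=(u,v\cdot g)$; consequently a subspace $U$ is $\mathcal{R}$-invariant if and only if it is an ideal (because $U\cdot L=\sum_g R_g(U)$), and in that case $U^{\perp}$ is $\mathcal{R}$-invariant as well, as $(R_g u,i)=(u,R_g i)$. Second, $\mathcal{R}$ is nilpotent: iterating $v\cdot g_1,\ (v\cdot g_1)\cdot g_2,\dots$ lands successively in $L^2,L^3,\dots$, so every product of $c$ right multiplications vanishes once $L^{c+1}=0$.

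The heart of the matter, and the step I expect to be the main obstacle, is producing the isotropic chain as a chain of genuine \emph{ideals}. I would prove, in the generality an induction requires, that if $V$ is a symplectic space and $\mathcal{R}$ is any nilpotent associative algebra of self-adjoint operators on $V$, then $V$ admits a complete flag $\{0\}=V_0<\cdots<V_{2n}=V$ with $\mathcal{R}V_i\subseteq V_{i-1}$ and $V_i^{\perp}=V_{2n-i}$; then $I_r:=V_r$ for $r\le n$ is the required isotropic ideal chain, since $V_r\subseteq V_r^{\perp}=V_{2n-r}$ and $V_rL\subseteq V_{r-1}$. The argument is symplectic reduction: the common kernel $K=\{v:\mathcal{R}v=0\}$ is nonzero and equals $(\mathcal{R}V)^{\perp}$, so picking a line $\ell\subseteq K$ one passes to $W=\ell^{\perp}/\ell$, which inherits a nondegenerate form and a nilpotent self-adjoint algebra $\mathcal{R}_W$, applies the inductive hypothesis, and pulls the flag back. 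The delicate point, and the reason one must carry an abstract $\mathcal{R}$ rather than argue inside the SAA directly, is that naive reduction modulo a central line is insufficient: the preimage of an ideal of $W$ need not be an ideal of $L$, because multiplication by a vector $t$ complementary to $\ell^{\perp}$ descends to a further self-adjoint operator $\overline{R_t}$ on $W$ that the flag must also respect. Retaining the whole self-adjoint nilpotent algebra (which automatically contains $\overline{R_t}$) closes this gap, and self-duality $V_i^{\perp}=V_{2n-i}$ survives reduction because $\pi(U^{\perp})=\pi(U)^{\perp_W}$ whenever $\ell\subseteq U\subseteq\ell^{\perp}$. Finally, since $\dim Z(L)\ge 2$ while a nondegenerate central plane would force $L^2=(L^2)^2$ and hence $L^2=0$, the center contains a $2$-dimensional isotropic subspace; starting the flag with $\ell<V_2\subseteq Z(L)$ arranges $I_2\subseteq Z(L)$.

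With the flag available, parts (b) and (c) become short computations with the $3$-fold symmetry $(xy,z)=(yz,x)$. For (b) I would note that $I_n=V_n=V_n^{\perp}$ is Lagrangian, and that for $m\in I_n$, $a\in I_{n-1}^{\perp}=V_{n+1}$ and any $g\in L$ one has $(m\cdot a,g)=(a\cdot g,m)$ with $a\cdot g\in V_{n+1}L\subseteq V_n=I_n$, which is isotropic and contains $m$; hence $m\cdot a=0$, so $I_n\cdot I_{n-1}^{\perp}=0$, and writing $I_{n-1}^{\perp}=I_n+\mathbb{F}w$ gives $(I_{n-1}^{\perp})^2=0$. For (c) I would check the central condition term by term along the listed chain: the lower steps $I_kL\subseteq I_{k-1}$ are immediate from $V_kL\subseteq V_{k-1}$; the top step $L^2\subseteq I_2^{\perp}$ is exactly $I_2\subseteq Z(L)$; the upper steps $I_k^{\perp}L\subseteq I_{k+1}^{\perp}$ dualize the lower ones, since for $a\in I_k^{\perp}$, $g\in L$, $b\in I_{k+1}$ one has $(a\cdot g,b)=(g\cdot b,a)=0$ because $g\cdot b\in I_{k+1}L\subseteq I_k$; and the single middle step $I_{n-1}^{\perp}L\subseteq I_{n-1}$ amounts to $(a\cdot g,b)=(b\cdot a,g)=0$ for $a,b\in I_{n-1}^{\perp}$, which holds since $b\cdot a\in(I_{n-1}^{\perp})^2=0$. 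Counting its terms, this chain has length $2n-3$, so $L$ is nilpotent of class at most $2n-3$.
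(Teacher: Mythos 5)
The paper states this proposition without proof (it is quoted from \cite{nsaa1, sor-thesis}), so there is no in-text argument to compare against; judged on its own, your proof is correct, and it confronts head-on the real difficulty that a naive induction via the quotient $\ell^{\perp}/\ell$ glosses over, namely that preimages of ideals of the quotient are a priori only ideals of $\ell^{\perp}$, not of $L$. Carrying the whole multiplication algebra through the symplectic reduction is exactly the right repair, and your verifications of the abelianness of $I_{n-1}^{\perp}$, of each step of the central chain via the cyclic identity $(xy,z)=(yz,x)$, and the length count $2n-3$ are all sound. Two small points should be tightened. First, the associative algebra $\mathcal{R}$ generated by the self-adjoint operators $R_g$ does not in general consist of self-adjoint operators (for a product one has $(R_gR_h)^{*}=R_hR_g$), so your inductive lemma should be stated for nilpotent algebras \emph{generated by} self-adjoint operators, equivalently closed under adjoints; every step you actually use (invariance of $U^{\perp}$, the identity $\ker\mathcal{R}=(\mathcal{R}V)^{\perp}$, self-adjointness of the induced generators on $W$) needs only this weaker hypothesis, so the fix is purely one of phrasing. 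Second, your justification that $Z(L)$ contains an isotropic plane is garbled: a nondegenerate plane $P\leq Z(L)$ does not obviously force $L^2=(L^2)^2$. The clean argument is that such a $P$ gives $L=P\oplus P^{\perp}$ with $P^{\perp}$ a nilpotent symplectic alternating algebra, whose centre is nonzero unless $P^{\perp}=0$, while $Z(P^{\perp})\leq Z(L)\cap P^{\perp}=0$; hence $\dim L=2$, which is excluded when $2n\geq 4$. Neither issue affects the architecture of the proof.
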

\noindent
Next, we apply the previous theorem to the standard basis $x_1, y_1, x_2, y_2,$
$\ldots, x_n, y_n$ where
\[\mbox{}\mbox{}\mbox{}\mbox{}I_1={\mathbb F}x_n,\  I_2={\mathbb F}x_n + {\mathbb F} x_{n-1},\  \ldots,\  I_n={\mathbb F}x_n+\cdots+{\mathbb F}x_1,\]
\[{I^\perp_{n-1}}=I_n+{\mathbb F}y_1,\  {I^\perp_{n-2}}=I_n+{\mathbb F}y_1+{\mathbb F}y_2,\  \ldots,\  {I^\perp_0}=L=I_n+{\mathbb F}y_1+\cdots+{\mathbb F}y_n.\]
Now, let $u, v, w$ be three of the basis elements. Since $I_n$ is abelian we have that $(uv, w) = 0$ whenever two of these three elements are from $\{ x_1, \ldots, x_n\}$. 
The fact that
\[ \{0\} < I_1 < \cdots < I_n \]
is central also implies that $(x_iy_j, y_k) =0 $ if $i \geq k$.
So we only need to consider the possible non-zero triples $(x_iy_j,y_k),\  (y_iy_j,y_k)$ for $1\leq i < j < k \leq n $. 
For each triple $(i, j, k)$ with $1 \leq i < j < k \leq n$, let $\alpha(i, j, k)$ and $\beta(i, j , k)$ be some elements in the field $\mathbb{F}$. 
We refer to the data  
\[
 \quad (x_iy_j,y_k)=\alpha(i, j, k),\quad (y_iy_j,y_k)=\beta(i, j, k), \quad 1 \leq i < j < k \leq n 
\]
as a \textit{nilpotent presentation}. We have just seen that every nilpotent SAA has a presentation of this type. 
Conversely, given any nilpotent presentation, let
 \[I_r={\mathbb F} x_n+{\mathbb F} x_{n-1}+\cdots+{\mathbb F} x_{n+1-r}\]
and we get an ascending central chain of isotropic ideals $\{0\}=I_0 < I_1 < \cdots < I_n$ such that $\mbox{dim\,} I_j = j$ for $j = 1, \ldots, n$. 
It follows that we then get a central chain 
\[\{0\}=I_0 < I_1 < \cdots  < I_n  < I^\perp_{n-1} < I^\perp_{n-2} < \cdots < I^\perp_0=L \]
and thus $L$ is nilpotent. 
Thus, every nilpotent presentation describes a nilpotent SAA.\\\\
\noindent
We define the lower and upper central series in a similar way to related structures like associative algebras and Lie algebras. Thus, we define the lower central series recursively by 
\[ L^1=L  \text{ and } L^{i+1}=L^i L, \]
\noindent
and the upper central series by 
\[ Z_0(L)=\{0\} \text{ and } Z_{i+1}(L)=\{ x \in L : xL \in Z_i(L) \}.\]
It is readily seen that the terms of the lower and the upper central series are all ideals of $L$.
In fact, the dimensions of the upper and lower central series can be described by the following useful inequality (see ~\cite{sor-2022}).  
\begin{lemm}
\noindent
Let $L$ be a nilpotent symplectic alternating algebra. Then
\begin{align*}
\text{dim\,} Z_i(L)  - \mbox{dim\,} Z_{i-1}(L) & \leq  \frac{1}{2}(\mbox{dim\,} Z_{i-1}(L)-\mbox{dim\,} Z_{i-2}(L))(\mbox{dim\,} Z_{i-1}(L) \\
& + \mbox{dim\,} Z_{i-2}(L)-1).
\end{align*}
\end{lemm}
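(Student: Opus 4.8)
The plan is to recognise the right-hand side as a difference of binomial coefficients and then realise $Z_i(L)/Z_{i-1}(L)$ inside a space of alternating bilinear forms. Writing $d_j = \dim Z_j(L)$, a direct computation gives $\frac{1}{2}(d_{i-1}-d_{i-2})(d_{i-1}+d_{i-2}-1) = \binom{d_{i-1}}{2} - \binom{d_{i-2}}{2}$, so it suffices to construct a linear map whose restriction to $Z_i(L)$ has kernel exactly $Z_{i-1}(L)$ and whose target has dimension $\binom{d_{i-1}}{2} - \binom{d_{i-2}}{2}$. For $x \in L$ I would set $\omega_x(u,v) = (ux, v)$, a bilinear form on $L$, and study the assignment $x \mapsto \omega_x$ into the space $\mathrm{Alt}(W)$ of alternating bilinear forms on a quotient $W$ of $L$, recalling $\dim \mathrm{Alt}(W) = \binom{\dim W}{2}$.

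First I would check that each $\omega_x$ is alternating. Self-adjointness $(ux,v) = (u,vx)$ together with antisymmetry of the form gives $\omega_x(u,v) = (u,vx) = -(vx,u) = -\omega_x(v,u)$, while two applications of the cyclic identity $(ab,c)=(bc,a)$ yield $\omega_x(u,u) = (ux,u) = (uu,x) = 0$; hence $\omega_x$ is alternating in every characteristic. Next I would invoke the duality $Z_j(L) = (L^{j+1})^\perp$ from (\ref{nice-ppty}). For $x \in Z_i(L)$ one has $ux \in Z_{i-1}(L) = (L^i)^\perp$, so $\omega_x(u,v) = 0$ whenever $v \in L^i$; therefore $\omega_x$ descends to an alternating form on $L/L^i$, a space of dimension $\dim L - \dim L^i = d_{i-1}$ (since $\dim(L^i)^\perp = \dim L - \dim L^i$). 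This produces a linear map $\Phi \colon Z_i(L) \to \mathrm{Alt}(L/L^i)$, where $\dim \mathrm{Alt}(L/L^i) = \binom{d_{i-1}}{2}$.

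Finally I would cut the target down. For $x \in Z_{i-1}(L)$ the same computation with $ux \in Z_{i-2}(L) = (L^{i-1})^\perp$ shows $\omega_x$ vanishes on $L^{i-1}/L^i$, so $\Phi(Z_{i-1}(L))$ lies in the subspace $\mathrm{Alt}(L/L^{i-1}) \hookrightarrow \mathrm{Alt}(L/L^i)$ of forms pulled back along $L/L^i \twoheadrightarrow L/L^{i-1}$, which has dimension $\binom{d_{i-2}}{2}$. Composing $\Phi$ with the quotient onto $\mathrm{Alt}(L/L^i)/\mathrm{Alt}(L/L^{i-1})$ thus annihilates $Z_{i-1}(L)$. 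The crucial point, and the step I expect to require the most care, is that the kernel of this composite meets $Z_i(L)$ in exactly $Z_{i-1}(L)$: if $x \in Z_i(L)$ and $\omega_x$ vanishes on $L^{i-1}/L^i$, then $(ux,v)=0$ for all $u \in L$, $v \in L^{i-1}$, so $ux \in (L^{i-1})^\perp = Z_{i-2}(L)$ for all $u$, i.e. $xL \subseteq Z_{i-2}(L)$, which is precisely the condition $x \in Z_{i-1}(L)$. Hence the composite induces an injection $Z_i(L)/Z_{i-1}(L) \hookrightarrow \mathrm{Alt}(L/L^i)/\mathrm{Alt}(L/L^{i-1})$, giving $d_i - d_{i-1} \le \binom{d_{i-1}}{2} - \binom{d_{i-2}}{2}$, which is the claimed inequality (the degenerate cases such as $i=2$, where $Z_{i-2}(L)=\{0\}$, being read off directly from the same formula).
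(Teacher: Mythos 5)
Your proof is correct. The paper itself does not prove this lemma --- it only cites \cite{sor-2022} --- so there is no in-text argument to compare against, but your construction is a clean, self-contained derivation of exactly the right bound. All the key steps check out: the algebraic identity $\tfrac{1}{2}(d_{i-1}-d_{i-2})(d_{i-1}+d_{i-2}-1)=\binom{d_{i-1}}{2}-\binom{d_{i-2}}{2}$; the fact that $\omega_x(u,u)=(ux,u)=(xu,u)=(uu,x)=0$ via two applications of the cyclic identity, which correctly handles characteristic $2$; the descent of $\omega_x$ to $L/L^i$ for $x\in Z_i(L)$ using the duality $Z_{j}(L)=(L^{j+1})^\perp$ together with antisymmetry of $\omega_x$ to cover both arguments; and, most importantly, the kernel computation showing that $\omega_x$ pulled back from $L/L^{i-1}$ forces $xL\subseteq (L^{i-1})^\perp=Z_{i-2}(L)$, i.e.\ $x\in Z_{i-1}(L)$, so the induced map $Z_i(L)/Z_{i-1}(L)\hookrightarrow \mathrm{Alt}(L/L^i)/\mathrm{Alt}(L/L^{i-1})$ is injective. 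This is essentially the standard way such bounds are obtained for these algebras (an element of $Z_i(L)$ is determined modulo $Z_{i-1}(L)$ by an alternating array of triple values), just packaged invariantly. The only point worth flagging is that the statement and your argument implicitly require $i\geq 2$ (for $i=1$ the right-hand side would be $0$ while $\dim Z_1(L)\geq 2$), which is consistent with how the paper applies the lemma.
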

\begin{Remark}. Equivalently, from the previous inequality and since $$\mbox{dim\,} L^{i} \, -\, \mbox{dim\,} L^{i+1} = \mbox{dim\,} ({L^{i+1}})^\perp \, - \,  \mbox{dim\,} ({L^i})^\perp = \mbox{dim\,} Z_i(L) \, -\,  \mbox{dim\,} Z_{i-1}(L),$$
it follows that $\mbox{dim\,} L^{i} \, -\, \mbox{dim\,} L^{i+1} $ has the same upper bound.\\
Thus for a nilpotent symplectic alternating algebra $L$ where $\mbox{dim\,} L > 2$ and $\mbox{dim\,} Z(L)=2$ we have:
\begin{align*}
2 < \mbox{dim\,} Z_2(L) \leq 2+1=3  &\Rightarrow \mbox{dim\,} Z_2(L)=3 \Leftrightarrow \mbox{dim\,} L^3 = dim L^2 -1.\\
3 < \mbox{dim\,} Z_3(L) \leq 3+2=5  &\Rightarrow \mbox{dim\,} Z_3(L) \in \{4,5\} \\
&\Leftrightarrow \mbox{dim\,} L^4 \in \{\mbox{dim\,} L^3-1,\mbox{dim\,} L^3-2\}.\\
\end{align*}
\end{Remark}
\noindent
Now we introduce some notations that will be needed in the sequel. 
Let $\Omega \colon {\mathbb N} \to {\mathbb N}$ be a function defined recursively by
$\Omega(0)=0,\, \Omega(1)=2,$ and 
$$\Omega (n+1)= 2 + {\Omega (n) \choose 2}.$$
\noindent
Let $\Omega(m) < n \leq \Omega(m+1)$. We consider two cases:
$\Omega(m) < n \leq \frac{\Omega(m)+\Omega(m+1)}{2}$ and $\frac{\Omega(m)+\Omega(m+1)}{2} < n \leq \Omega(m+1)$.\\\\
First we deal with the case of algebras where $\Omega(m) < n \leq \frac{\Omega(m)+\Omega(m+1)}{2}$.
Notice that as $\Omega(m) < n \leq \frac{\Omega(m)+\Omega(m+1)}{2}$, $2n - \Omega(m) \leq \Omega(m+1)$ and thus \begin{equation*} 2n-2-\Omega(m) \leq {\Omega(m) \choose 2}. \end{equation*}
\noindent
Let $V$ be a symplectic vector space of dimension $2n \geq 8$ whose alternating form is non-degenerate and consider a standard basis $x_1, y_1, \ldots, x_n, y_n$.
We consider the following sets:
\begin{align*}
\mathcal{U}:=\, & \{ x_1, y_1, \ldots, x_{n-\Omega(m)}, y_{n-\Omega(m)} \}\\
 V_{\Omega(r)}:=\, & \{x_n, x_{n-1}, \ldots, x_{n-\Omega(r)+1} \}\\
V^{(2)}_{\Omega(r)}:=\, &\{ (y_i,y_j) \colon n-\Omega(r)+1 \leq i < j \leq n \},
\end{align*}
where $1 \leq r \leq m$ and $\mathcal{W} \subseteq V^{(2)}_{\Omega(m)} \setminus V^{(2)}_{\Omega(m-1)}$.
It is not difficult to see that there is a bijection $$\psi_{r} \colon V_{\Omega(r+1)} \setminus
 V_{\Omega(r)} \to V^{(2)}_{\Omega(r)} \setminus V^{(2)}_{\Omega(r-1)}, u \mapsto (v,w).$$
Further, as
$2 (n - \Omega(m)) \leq \Omega(m+1)-\Omega(m) = {\Omega(m) \choose 2} \setminus {\Omega(m-1) \choose 2} = |V^{(2)}_{\Omega(m)} \setminus V^{(2)}_{\Omega(m-1)}|,$
there is an injection 
$$\psi_{m} \colon U \to V^{(2)}_{\Omega(m)}
 \setminus V^{(2)}_{\Omega(m-1)}, z \mapsto (x,y).$$ 
Notice that if $\Omega(m)-\Omega(m-1) \leq 2$ then $2 \leq |U| \leq 6$ and we choose $\psi_{m}$ such that $\psi_{m}(y_1)$ (if needed) involves all elements in 
$\{ y_{n-\Omega(m)+1}, \ldots, y_{n-\Omega(m-1)} \}$. Then
there is a natural bijection $$\Psi \colon V_{\Omega(m)} \bigcup U \setminus V_{\Omega(1)} \to 
V^{(2)}_{\Omega(m-1)} \bigcup \psi_{m}(u), x \mapsto (v,w).$$\\
Now consider the set
$$T^{(3)} = \{ (u,v,w) \colon u \in V_{\Omega(m)} \bigcup \mathcal{U}
 \setminus V_{\Omega(1)}, \, (v,w) =\Psi (u)\in 
 V^{(2)}_{\Omega(m-1)} \bigcup \mathcal{W}\}.$$
It follows that we can pick our standard basis $x_1, y_1, \ldots, x_n, y_n$ such that each triple in $T^{(3)}$
is a standard, where each triple is either of the form $(y_i y_j, y_k)$ or of the form $(x_i y_j, y_k)$, with the following extra properties:
\begin{itemize}
\item[(1)]\ $u \in \{ x_{n-2}, x_{n-3},  \ldots, x_1 , y_1, \ldots,  y_{n-\Omega(m)}  \}$ 
and $ (v,w) \in V^{(2)}_{\Omega(m)}$.\\
\item[(2)]\ For $ 1 \leq r \leq m-1$ we have 
\begin{itemize}
\item[(i)]\ For each $u \in {V}_{\Omega(r+1)} \setminus 
{V}_{\Omega(r)}$ there is a unique $(v,w)$ such that $(u,v,w) \in
 T^{(3)}$ and $(v,w) \in V^{(2)}_{\Omega(r)} \setminus 
V^{(2)}_{\Omega(r-1)}$, conversely
\item[(ii)]\ For each $(v,w) \in {V^{(2)}}_{\Omega(r)} \setminus 
{V^{(2)}}_{\Omega(r-1)}$ there is a unique $u$ such that
 $(u,v,w) \in T^{(3)}$ and $ u \in V_{\Omega(r+1)} 
\setminus V_{\Omega(r)}$.\\
\end{itemize}
\item[(3)]\ No two triples in $T^{(3)}$ have two common entries.\\
\noindent
\noindent
\item[(4)]\ All of $x_{n-2}, x_{n-3}, \ldots, x_1, y_1, \ldots, y_n$ are involved in some triple. 
\\
\end{itemize}
In particular, all elements in $\{x_{n-2}, \ldots, y_{n}\}$ are involved in a standard triple.
We now have acquired all the necessary ingredients to obtain a symplectic alternating algebra of rank $2$ and dimension $2n \geq 8$ of class $2m+1$ over any field $\mathbb{F}$ as described in the next result. More details can be found in [~\cite{sor-2022}, Theorem 3.6].\\
\begin{prop} Let $\mathbb{F}$ be a field and let $L$ be a symplectic vector space of dimension $2n \geq 8$ with non-degenerate alternating form such that 
$\Omega(m) < n \leq \frac{\Omega(m)+\Omega(m+1)}{2}$. Then there exists a nilpotent symplectic alternating algebra $L$ of rank $2$ and dimension $2n \geq 8$ over $\mathbb{F}$ that can be given by the
presentation
 $${\mathcal P}:\ (u v, w) =1,\ \forall (u,v,w) \in T^{(3)}$$
 where the nilpotent class of $L$ is $2m+1$.\\
\end{prop}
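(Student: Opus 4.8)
\noindent
The plan is to check three things in turn: that the rule $\mathcal P$ defines a nilpotent SAA $L$, that $r(L)=2$, and that the nilpotence class of $L$ is exactly $2m+1$. The first point is immediate: every triple in $T^{(3)}$ has the shape $(x_iy_j,y_k)$ or $(y_iy_j,y_k)$ with $1\le i<j<k\le n$, so assigning the value $1$ to each such triple and $0$ to all others is a nilpotent presentation in the sense introduced after Proposition~2.2. By Lemma~2.1 it determines a unique SAA $L$ of dimension $2n$, and by that same discussion $L$ is automatically nilpotent, carrying the isotropic central chain built from $I_r=\mathbb F x_n+\cdots+\mathbb F x_{n-r+1}$. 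Thus only the rank and the exact class remain.

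For the rank I would show $Z(L)=\mathbb F x_{n-1}+\mathbb F x_n$. Because a defining triple can use $x_i$ only for $i\le n-2$, neither $x_{n-1}$ nor $x_n$ occurs in any triple of $T^{(3)}$; every triple containing one of them therefore vanishes, so $x_{n-1},x_n\in Z(L)$ and $\dim Z(L)\ge 2$. For the reverse inclusion I would take a central $z=\sum a_ix_i+\sum b_iy_i$ and use property~(4) — each of $x_1,\dots,x_{n-2},y_1,\dots,y_n$ occurs in some triple — together with property~(3) — no two triples share two common entries — to conclude that a nonzero coefficient on any of these vectors forces $zL\ne0$; property~(3) is exactly what prevents two contributions from cancelling. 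Hence $\dim Z(L)=2$ and $r(L)=\dim Z(L)=2$.

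For the class I would compute the upper central series explicitly. The key claim, proved by induction on $r$, is
\[
 Z_r(L)=I_{\Omega(r)}=\langle x_n,\dots,x_{n-\Omega(r)+1}\rangle,\qquad 0\le r\le m .
\]
The cases $r=0,1$ are clear, since $Z_1=Z(L)=I_2=I_{\Omega(1)}$. For the inductive step note that an $x$-vector occurs in $T^{(3)}$ only as the leading entry of a product, so each $x_s\in V_{\Omega(r+1)}\setminus V_{\Omega(r)}$ has a single nonzero product, which by the bijection $\psi_{r}\colon V_{\Omega(r+1)}\setminus V_{\Omega(r)}\to V^{(2)}_{\Omega(r)}\setminus V^{(2)}_{\Omega(r-1)}$ and property~(2) lands inside $\langle x_k:k\ge n-\Omega(r)+1\rangle=Z_r$; thus $x_s\in Z_{r+1}$. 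Conversely, an $x$-vector of the next block, or any $y$-vector, has (by properties~(3) and~(4)) a product escaping $Z_r$, so the inclusion $I_{\Omega(r+1)}\subseteq Z_{r+1}$ is an equality. In particular $\dim Z_r=\Omega(r)<n$ for $r\le m$, so the whole climb so far stays inside the Lagrangian $I_n$.

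At level $m+1$ the remaining basis vectors enter the centre simultaneously: the injection $\psi_{m}\colon\mathcal U\to V^{(2)}_{\Omega(m)}\setminus V^{(2)}_{\Omega(m-1)}$ sends each of the $2(n-\Omega(m))$ vectors of $\mathcal U=\{x_1,y_1,\dots,x_{n-\Omega(m)},y_{n-\Omega(m)}\}$ to a product lying in $Z_m$, so that
\[
 \dim Z_{m+1}(L)=\Omega(m)+2\bigl(n-\Omega(m)\bigr)=2n-\Omega(m).
\]
The hypothesis $n\le\frac{\Omega(m)+\Omega(m+1)}2$ enters here precisely once: it is equivalent to $2(n-\Omega(m))\le\Omega(m+1)-\Omega(m)$, i.e.\ the middle increment $\dim Z_{m+1}-\dim Z_m$ does not exceed the bound of Lemma~2.3, so this jump is admissible. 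Continuing the same induction on the $y$-side — reading each triple now from its trailing $y$-entry — yields $\dim Z_{m+1+j}(L)=2n-\Omega(m-j)$ for $0\le j\le m$, a climb symmetric to the first half. Hence $Z_{2m+1}(L)=L$ while $\dim Z_{2m}(L)=2n-\Omega(1)=2n-2<2n$, giving class exactly $2m+1$. I expect the real difficulty to be the exact determination of the upper central series at each level — excluding spurious central elements, which is where properties~(3) and~(4) do the work — and in particular the transition at $r=m+1$, where the series leaves the isotropic subspace, the case hypothesis is consumed, and the symmetric continuation on the $y$-side must be justified.
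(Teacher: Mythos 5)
The paper does not actually prove this proposition; it is quoted from \cite{sor-2022} (Theorem 3.6 there), so there is no in-paper argument to compare against line by line. That said, your outline is the natural argument and almost certainly the one the cited source uses: observe that $\mathcal P$ is a nilpotent presentation (so Lemma 2.1 and the discussion after Proposition 2.2 give a nilpotent SAA of dimension $2n$ for free), then pin down $Z(L)=\mathbb F x_n+\mathbb F x_{n-1}$ using properties (3) and (4), and finally compute the upper central series explicitly, with $Z_r(L)=I_{\Omega(r)}$ for $r\le m$ and the symmetric climb $\dim Z_{m+1+j}=2n-\Omega(m-j)$ afterwards. Your identification of where the hypothesis $n\le\frac{\Omega(m)+\Omega(m+1)}{2}$ is consumed is essentially right, though it is cleaner to say it guarantees the injection $\psi_m$ exists (i.e.\ $|\mathcal U|\le|V^{(2)}_{\Omega(m)}\setminus V^{(2)}_{\Omega(m-1)}|$) than to invoke Lemma 2.3, which is a necessary condition on any nilpotent SAA rather than a sufficient one for the construction.

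The one place your sketch is genuinely thin is the ``exactness'' half of the upper-central-series computation for $r\ge 1$ and for the $y$-side descent. For $Z(L)$ you correctly note that property (3) forces $(zu,v)=\pm c_t$ for the unique $t$ completing $(u,v)$ to a triple, so no cancellation can occur for a general linear combination $z$; but the same cancellation issue recurs at every level $Z_{r+1}$, and for the descent $Z_{m+1+j}$ the relevant products are of the form $y_iy_j=\pm(\text{dual of }u)$ with $u=\Psi^{-1}(y_i,y_j)$, which land among the $y$-basis vectors rather than in $I_n$ --- so one must track which \emph{dual} vectors have already entered the series, not merely which $x$'s. This is routine given properties (2)(i)--(ii), (3) and (4), but it is the part that actually needs to be written out for the argument to be complete; as it stands you assert it by symmetry. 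I would not call this a wrong step, only the portion of the proof that remains to be done.
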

\noindent
Turning to the second case of algebras where $\frac{\Omega(m)+\Omega(m+1)}{2} < n \leq \Omega(m+1)$.
Then $\frac{\Omega(m+1) - \Omega(m)}{2} <   n - \Omega(m) \leq \Omega(m+1) - \Omega(m)$ and as $\Omega(m+1) - \Omega(m) > \Omega(m) - \Omega(m-1)$, we have
\begin{equation*} n - \Omega(m) > \frac{\Omega(m) - \Omega(m-1)}{2}. \end{equation*}\\
We show that there exist an algebra $L$ of rank $2$ and dimension $2n \geq 8$ over any field $\mathbb{F}$ such that the nilpotence class of $L$ is $2m+2$. First let $V$ be a symplectic vector apace with a standard basis $x_1, y_1, \ldots, x_n, y_n$ and consider the sets
\begin{eqnarray*}
 V_{\Omega(r)} &:=& \{x_n, x_{n-1}, \ldots, x_{n-\Omega(r)+1} \},\\
V^{(2)}_{\Omega(r)} &:= & \{ (y_i,y_j) \colon n-\Omega(r)+1 \leq i < j \leq n \},\\
 \mathcal{U} &:= & \{  x_{n-\Omega(m)}, x_{n-\Omega(m) -1}, \ldots, x_1 \}, \\
\mathcal{W} &:= & \{  y_{n-\Omega(m)}, y_{n-\Omega(m) -1}, \ldots, y_1 \}, 
\end{eqnarray*}
where $1 \leq r \leq m$.
As in the previous case for all $ 1 \leq r \leq m-1$, there is a bijection $$\psi_{r} \colon V_{\Omega(r+1)} \setminus
 V_{\Omega(r)} \to V^{(2)}_{\Omega(r)} \setminus V^{(2)}_{\Omega(r-1)}, u \mapsto (v,w).$$
We also have an injection 
$$\psi_{m} \colon \mathcal{U} \to V^{(2)}_{\Omega(m)} \setminus V^{(2)}_{\Omega(m-1)}, u \to (v,w).$$
As $n - \Omega(m) > \frac{\Omega(m+1) - \Omega(m)}{2}$, we choose the injection $\psi_{m}$ such that $\mathcal{U}$ gets mapped to a pair that involved all the elements in $$ y_{n - \Omega(m-1)},   \ldots,    y_{n - \Omega(m)+1}.$$ Therefore
there is a natural bijection 
$$ \Psi \colon V_{\Omega(m)} \bigcup \mathcal{U} \setminus V_{\Omega(1)} \to T^{(2)}:= V^{(2)}_{\Omega(m -1)} \bigcup \psi_{m}(\mathcal{U}), u \to (v,w).$$
It follows that each triple in
$$T^{(3)} = \{ (u,v,w) \colon u \in V_{\Omega(m)} \bigcup \mathcal{U}
 \setminus V_{\Omega(1)}, \text{ and } (v,w) = \Psi (u) \in 
V^{(2)}_{\Omega(m -1)} \bigcup \psi_{m}(\mathcal{U}) \}$$
is a standard triples with the following extra properties that
\begin{itemize}
\item[eq-l4-gen]\ $u \in \{ x_{n-2}, x_{n-3},  \ldots, x_1\}$ and $ (v,w) \in V^{(2)}_{\Omega(m)} \cup W.$\\
\item[(2)]\ For $ 1 \leq r \leq m-1$ we have 
\begin{itemize}
\item[(i)]\ For each $u \in {V}_{\Omega(r)} \setminus 
{V}_{\Omega(r-1)}$ there is a unique $(v,w)$ such that $(u,v,w) \in
 T^{(3)}$ and $(v,w) \in V^{(2)}_{\Omega(r-1)} \setminus 
V^{(2)}_{\Omega(r)}$, conversely
\item[(ii)]\ For each $(v,w) \in V^{(2)}_{\Omega(r-1)} \setminus 
V^{(2)}_{\Omega(r)}$ there is a unique $u$ such that
 $(u,v,w) \in T^{(3)}$ and $ u \in V_{\Omega(r)} 
\setminus V_{\Omega(r-1)}$.
\item[(iii)]\ For each $x_i \in U$, there is a unique $(v,w)$ such that $(u,v,w) \in T^{(3)}$ and $(v,w) \in V^{(2)}_{\Omega(m)} \setminus V^{(2)}_{\Omega(m-1)}$.\\
\end{itemize}
\item[(3)]\ No two triples in $T^{(3)}$ have two common entries.\\
\item[(4)]\ All of $x_{n-2}, x_{n-3}, \ldots, x_1, y_1, \ldots, y_n$ occurs as an entry of some triple in $T^{(3)}$.\\
\end{itemize}
\noindent
Therefore, all elements apart from the central elements are engaged in some standard triples. We are now ready to describe a subclass of the corresponding algebras for this case, see Theorem 3.9 in~\cite{sor-2022}, for more details.

\noindent
\begin{prop} Let $L$ be a symplectic vector space of dimension $2n \geq 8$ with non-degenerate alternating form such that 
$\frac{\Omega(m)+\Omega(m+1)}{2} < n \leq \Omega(m+1)$. Then there exists a nilpotent symplectic alternating algebra $L$ of rank $2$ and dimension $2n \geq 8$ over a field $\mathbb{F}$ that is of class $2m+2$. This algebra can be given by the
presentation
$${\mathcal P}:\ (u v, w) =1,\ \text{ for all } (u,v,w) \in T^{(3)}$$
\end{prop}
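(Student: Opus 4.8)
The plan is to regard $\mathcal{P}$ as a nilpotent presentation and to read off the rank and the class from the combinatorics of $T^{(3)}$. First I would appeal to Lemma 2.1 and the discussion of nilpotent presentations preceding it: every triple in $T^{(3)}$ has the admissible shape $(x_iy_j,y_k)$ with $i<j<k$, so declaring $(x_iy_j,y_k)=1$ on these triples and setting all remaining structure constants to $0$ determines, by Lemma 2.1, a unique symplectic alternating algebra $L$ of dimension $2n$, and the associated isotropic chain exhibits $L$ as nilpotent. By Proposition 2.2 its class is finite and at most $2n-3$, so it remains only to compute the two invariants exactly.

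For the rank I would use property (4). A basis vector $z$ is central precisely when it occurs in no triple, since $(zu,w)=(uw,z)=(wz,u)$ shows that any basis vector occurring in a nonzero triple has a nonzero product. By construction the admissible first entries run over $\{x_{n-2},\dots,x_1\}$ and the pair entries exhaust $\{y_1,\dots,y_n\}$, so property (4) says that every basis vector except $x_n$ and $x_{n-1}$ is engaged; property (3), guaranteeing that distinct triples share at most one entry, rules out accidental cancellation and shows that no further combination is central. Hence $Z(L)=\mathbb{F}x_n+\mathbb{F}x_{n-1}$ and $r(L)=\dim Z(L)=2$.

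The core is the nilpotency class, which I would obtain from the upper central series together with the self-dual growth estimate of Lemma 2.3. The first step is to prove $\dim Z_r(L)=\Omega(r)$ for $0\le r\le m$ by induction. For the inclusion $Z_r(L)\supseteq\langle V_{\Omega(r)}\rangle$ I would use that each $x_i\in V_{\Omega(r)}\setminus V_{\Omega(r-1)}$ lies in a single triple as first entry (property 2) and that the bijection $\psi_{r-1}$ forces its unique nonzero product to be a basis vector indexed inside $V_{\Omega(r-1)}$; thus $x_iL\subseteq Z_{r-1}(L)$ and $x_i\in Z_r(L)$. For the matching bound $\dim Z_r(L)\le\Omega(r)$ I would invoke Lemma 2.3 inductively, starting from $\dim Z_1(L)=2=\Omega(1)$ and using $\Omega(r)=2+\binom{\Omega(r-1)}{2}$; the two bounds give $Z_r(L)=\langle V_{\Omega(r)}\rangle$ with $\dim Z_r(L)=\Omega(r)$. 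In particular $\dim Z_m(L)=\Omega(m)<n$, and the injection $\psi_m$ absorbs the remaining generators of $\mathcal{U}$, carrying the series up to dimension $n$ at step $m+1$.

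The hard part will be to show that the series then closes up at step exactly $2m+2$, and here the self-duality $Z_i(L)=(L^{i+1})^{\perp}$ is decisive: it forces the gaps $g_i=\dim Z_i(L)-\dim Z_{i-1}(L)$ of the upper and lower series to coincide, makes the computation of the second half the mirror image of the first, and in the extremal profile realized here gives the palindromic pattern $g_i=g_{c+1-i}$ with $g_1=2$. The left gaps realize the maximal $\Omega$-growth just computed, so the two halves each contribute $\Omega(m)$ to the total $2n$, leaving mass $2n-2\Omega(m)$ for the middle. A class of $2m+1$ would demand a single central gap of size $2(n-\Omega(m))$, while a class of $2m+2$ demands two equal gaps of size $n-\Omega(m)$; the computation underlying Lemma 2.3 bounds each by $\Omega(m+1)-\Omega(m)$, and the hypothesis $\tfrac12(\Omega(m)+\Omega(m+1))<n\le\Omega(m+1)$ is precisely the range in which $2(n-\Omega(m))$ violates this bound while $n-\Omega(m)$ respects it. Thus the single-gap (class $2m+1$) configuration is excluded and the two-gap (class $2m+2$) configuration is forced. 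The main obstacle is verifying that the constructed $L$ genuinely attains this extremal palindromic profile — that the designed injection $\psi_m$ (engaging all the prescribed $y$-indices) together with property (3) keeps every intermediate dimension maximal rather than collapsing; granting this one obtains $Z_{2m+1}(L)\subsetneq L=Z_{2m+2}(L)$, so $L$ has class $2m+2$.
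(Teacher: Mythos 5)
The paper itself offers no proof of this proposition --- it is imported from Theorem 3.9 of the cited reference --- so there is no in-paper argument to match yours against; I can only assess the proposal on its merits. Your opening steps are sound and clearly what the construction is designed for: Lemma 2.1 together with the nilpotent-presentation discussion gives existence and nilpotency for free, and properties (3) and (4) are the intended mechanism for forcing $Z(L)=\mathbb{F}x_n+\mathbb{F}x_{n-1}$ and hence rank $2$. The first half of your central-series computation, $\dim Z_r(L)=\Omega(r)$ for $r\le m$ via the layered bijections $\psi_r$ (for the containment) and Lemma 2.3 (for the matching upper bound), is also the right idea.

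The genuine gap is the step you yourself flag as ``the main obstacle,'' and it is not a technicality --- it is the content of the proposition. Your route to class exactly $2m+2$ rests on the claim that the duality $Z_i(L)=(L^{i+1})^{\perp}$ forces a palindromic gap sequence $g_i=g_{c+1-i}$. It does not: the duality gives $\dim Z_i(L)-\dim Z_{i-1}(L)=\dim L^{i}-\dim L^{i+1}$, i.e.\ the gap sequences of the upper and lower central series agree \emph{in the same order}, which imposes no reversal symmetry at all (for a class-$5$, rank-$2$ algebra of dimension $8$ the duality alone permits the non-palindromic profile $2,1,1,2,2$ just as well as $2,1,2,1,2$). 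Without palindromicity, the dichotomy ``one middle gap of size $2(n-\Omega(m))$ versus two gaps of size $n-\Omega(m)$'' has no basis; moreover, that argument is aimed at ruling out class $2m+1$ for an \emph{arbitrary} rank-$2$ algebra, which is essentially the unproved conjecture of Question B, whereas what the proposition needs is the exact class of \emph{this} algebra. The missing piece is a direct computation of the lower (or upper) central series from the presentation: using property (2), each nonzero product prescribed by a triple in $T^{(3)}$ moves exactly one layer down the filtration by the sets $V_{\Omega(r)}$ and $V^{(2)}_{\Omega(r)}$, so one tracks $L^{k}$ explicitly down to $L^{2m+2}=Z(L)$ and $L^{2m+3}=0$, with the hypothesis $\frac{\Omega(m)+\Omega(m+1)}{2}<n\le\Omega(m+1)$ (through the stipulated choice of $\psi_m$ hitting all of $y_{n-\Omega(m-1)},\dots,y_{n-\Omega(m)+1}$) being exactly what prevents the chain from terminating one step early at $2m+1$. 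Until that bookkeeping is carried out, neither the upper bound $\mathrm{class}(L)\le 2m+2$ nor the lower bound $\mathrm{class}(L)\ge 2m+2$ for the constructed algebra has been established.
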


\noindent
\section{Key Technical Lemmas}
\noindent
As we pointed out in the Introduction, we study symplectic alternating algebras of dimension $2n$ over any field $\mathbb{F}$ that are minimal, in the sense that they are of rank $2$ with minimum nilpotency class.\\\\

\noindent
In ~\cite{nsaa1} nilpotent symplectic algebras of dimension $8$ have been classified, whereas~\cite{nsaa2, nsaa3} reveals the classification of nilpotent algebras of dimensions up to $10$. 
From the classification, we know that there is only one algebra of dimension $8$ that is of both maximal and minimal nilpotency class and seven algebras of dimension $10$ with an isotropic center of dimension $2$. Then in ~\cite{sor-2022} we identified nilpotent algebras of $2n \leq 10$ that are minimal as follows.

\noindent
\begin{lemm} Let $L$ be the minimal nilpotent algebra of dimension $2n \leq 10$. 
Then $L$ has one of the following presentations 
$${\mathcal P}_{8}^{(2,1)}(r):\\ \quad (x_2 y_3, y_4)=r,\ (x_1 y_2, y_3)=1,\ (y_1y_2,y_4)=1.$$
$${\mathcal P}_{10}^{(2,1)}:\ \ (x_{3}y_{4},y_{5})=1,\ (x_{2}y_{3},y_{5})=1,\ 
      (x_{1}y_{3},y_{4})=1,\ (y_{1}y_{2},y_{5})=1.$$
 $${\mathcal P}_{10}^{(2,2)}(r):\ \ (x_{3}y_{4},y_{5})=r,\ 
      (x_{2}y_{3},y_{5})=1,\ (x_{1}y_{3}y_{4})=1,\ (y_{1}y_{2},y_{3})=1,$$
where $r\not =0$. Furthermore 
${\mathcal P}_{8}^{(2,1)}(r) \cong {\mathcal P}_{8}^{(2,1)}(s)$ if and only if $s/r \in (\mathbb{F}^* )^3$ and conversely any algebra with such a presentation has the properties stated.
We also have ${\mathcal P}_{10}^{(2,2)}(r) \cong {\mathcal P}_{10}^{(2,2)}(s)$ if and only if $s/r\in ({\mathbb F}^{*})^{4}$ and, conversely, any algebra with such a presentation has the properties stated.
\end{lemm}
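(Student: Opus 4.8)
The plan is to treat the dimensions $2n=8$ and $2n=10$ in turn, and in each case to carry out three steps: (a) check that each presentation in the list is a nilpotent presentation defining an SAA with the stated rank and nilpotency class; (b) show conversely that a minimal algebra must be one of these; and (c) settle the two isomorphism statements. (Dimensions $2n<8$ carry no minimal rank-$2$ algebras in the present sense and need no discussion.)

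For step (a): every displayed data set consists of triples $(x_iy_j,y_k)$ and $(y_iy_j,y_k)$ with $1\le i<j<k\le n$, so it is a nilpotent presentation in the sense of Section~2 and, by Lemma~2.1 together with the discussion following Proposition~2.2, it determines a unique nilpotent SAA. One then computes the lower central series directly: for ${\mathcal P}_8^{(2,1)}(r)$ one obtains $(\dim L^1,\dots,\dim L^6)=(8,6,5,3,2,0)$, so the class is $5$ and $r(L)=\dim L-\dim L^2=2$; for both ${\mathcal P}_{10}^{(2,1)}$ and ${\mathcal P}_{10}^{(2,2)}(r)$ one obtains $(\dim L^1,\dots,\dim L^7)=(10,8,7,5,3,2,0)$, so the class is $6$ and again $r(L)=2$. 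The hypothesis $r\ne 0$ is used precisely at the last nonzero term of the series; when $r=0$ the dimension of $L^2$ drops and the algebra is no longer of rank $2$. Combined with steps (b) and (c), this also gives the ``conversely, any algebra with such a presentation has the properties stated'' assertions.

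For step (b): rank $2$ forces $\dim L^2=2n-2$ and $\dim Z_1(L)=2$, whence Remark~2.4 forces $\dim Z_2(L)=3$, i.e.\ $\dim L^3=2n-3$; and since $L^cL=L^{c+1}=0$ for the class $c$ we have $L^c\subseteq Z_1(L)$, so $\dim L^c\le 2$. Feeding these facts back into Lemma~2.3 rules out every nilpotency class $\le 4$ in both dimensions. Since Propositions~2.5 and~2.6 exhibit rank-$2$ algebras of class $5$ in dimension $8$ and of class $6$ in dimension $10$, it then suffices to exclude class $5$ for rank-$2$ algebras of dimension $10$ and to enumerate the rank-$2$ algebras of class $5$ in dimension $8$ and of class $6$ in dimension $10$. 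For this one works with the nilpotent presentation adapted to the central chain: the rank and class conditions fix $\dim(L^i/L^{i+1})$ for every $i$, and together with the self-adjointness identities $(x_ix_j,x_k)=(x_jx_k,x_i)=(x_kx_i,x_j)$ and their $y$-analogues they force all structure constants lying outside a short fixed pattern to vanish; a change of basis preserving the alternating form and the flag then normalises the surviving constants to $1$ except for one scalar $r$. More economically, one reads the answer off the known complete classifications of nilpotent SAAs of dimension at most $10$ (\cite{nsaa1} in dimension $8$, \cite{nsaa2,nsaa3} in dimension $10$): of the seven nilpotent SAAs of dimension $10$ with a two-dimensional isotropic centre, only ${\mathcal P}_{10}^{(2,1)}$ and ${\mathcal P}_{10}^{(2,2)}(r)$ have class $6$ (hence the minimum), and in dimension $8$, where $F(4)=f(4)=5$ so every rank-$2$ algebra has class exactly $5$, the only algebras are the ${\mathcal P}_8^{(2,1)}(r)$. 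This step is the main obstacle: the elementary dimension inequalities of Lemma~2.3 do not by themselves rule out a rank-$2$ dimension-$10$ algebra of class $5$ (they only force the lower central profile $10,8,7,5,\le 2,0$), nor do they control which class-$6$ presentations can occur, so one leans here on the finite classifications of \cite{nsaa1,nsaa2,nsaa3} (or on a considerably more delicate hands-on analysis of the admissible presentations and of the basis changes acting on them).

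For step (c): by (b), after normalisation the only residual freedom is the rescaling $y_{n-1}\mapsto t\,y_{n-1}$, $y_n\mapsto y_n$ (or the same applied to the other generator), $t\in{\mathbb F}^*$. Since every other basis vector is a product of $y_{n-1}$ and $y_n$, propagating such a map through the relations determines how each $x_i$ and $y_i$ must scale, and demanding that the alternating form be preserved on passing from the presentation with parameter $r$ to the one with parameter $s$ yields a single equation: $t^3s^2=r^2$ in the dimension-$8$ family and $t^4s^3=r^3$ in the dimension-$10$ family (the exponents coming out of the shape of the lower central series). As $\gcd(3,2)=\gcd(4,3)=1$, these equations are solvable for $t$ exactly when $r/s\in({\mathbb F}^*)^3$, respectively when $r/s\in({\mathbb F}^*)^4$ --- equivalently, when $s/r$ lies in the corresponding subgroup --- and running the computation backwards exhibits an explicit symplectic algebra isomorphism, which gives the ``conversely'' directions.
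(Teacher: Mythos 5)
The paper does not actually prove this lemma: its entire ``proof'' is the citation ``See Propositions 2.2 and 2.3 in \cite{sor-2022}.'' Your proposal is therefore a genuinely different route in the sense that you attempt to reconstruct the argument, and your three-step skeleton (verify the presentations, enumerate the minimal algebras, settle the isomorphism criteria) is the right shape for such a reconstruction. Step (a) is sound and checkable by direct computation, and you correctly identify step (b) as the real content; but note that where you make step (b) precise you too fall back on the external classifications of \cite{nsaa1,nsaa2,nsaa3}, so in substance your proof is no more self-contained than the paper's citation --- it just makes explicit what that citation is being asked to deliver. That added transparency is worth something, but it does not replace the omitted enumeration.

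The one place where you claim to do genuinely new work and where there is a real gap is step (c). For the ``if'' direction of the isomorphism criteria an explicit family of rescalings suffices, and your gcd argument correctly converts an equation of the form $t^3=(r/s)^2$ into the condition $s/r\in(\mathbb{F}^*)^3$ (likewise $t^4=(r/s)^3$ into $s/r\in(\mathbb{F}^*)^4$). But the ``only if'' direction requires controlling \emph{every} isomorphism between the two algebras, and your assertion that ``the only residual freedom is the rescaling $y_{n-1}\mapsto t\,y_{n-1}$, $y_n\mapsto y_n$'' is not justified. An isomorphism preserves the lower central series and hence the flag, but its induced action on the two-dimensional quotient $L/L^2$ is a priori an arbitrary invertible map, e.g.\ $y_n\mapsto \alpha y_n+\beta y_{n-1}+(\text{terms in }L^2)$; one must either show that the off-diagonal and lower-order contributions are forced to vanish or propagate them through the relations and verify they impose no weaker condition on $s/r$. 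Without that analysis the stated exponents $3$ and $4$ are asserted rather than derived, and the ``only if'' halves of both isomorphism statements remain unproved.
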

\begin{proof} See Propositions 2.2 and 2.3 in~\cite{sor-2022}.
\end{proof}

\noindent
Turning to the subclass of algebras of dimension $2n \geq 12$, we next see that it must have nilpotency class greater than $6$. 
It is worth noting that we have minimal algebras of dimension $8$ and nilpotency class $5$, as well as minimal algebras of dimension $10$ and class $6$.

\noindent
\begin{lemm}
Let $L$ be a nilpotent algebra of dimension $2n \geq 12$ over any field with the property that $\mbox{dim\,} \ Z(L) =2$. Then $L$ must have class at least $7$.
\end{lemm}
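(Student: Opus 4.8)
The plan is to argue by contradiction using the dimension inequality for the (upper equivalently lower) central series together with the duality $Z_i(L) = (L^{i+1})^\perp$. Suppose $L$ has dimension $2n \geq 12$, $\dim Z(L) = 2$, and nilpotency class $c \leq 6$. Writing $d_i = \dim Z_i(L)$, we have $d_0 = 0$, $d_1 = 2$, $d_c = 2n$, and Lemma 2.3 together with the Remark gives $d_2 = 3$ and $d_3 \in \{4,5\}$. The key observation is that by the duality, $\dim L - \dim L^2 = \dim Z(L) = 2$, so $\dim L^2 = 2n-2$; and since the jump $d_{i} - d_{i-1}$ equals $\dim L^{i} - \dim L^{i+1}$, the same upper bound from Lemma 2.3 controls how fast the lower central series can descend from $L^2$ down to $L^{c+1} = \{0\}$, i.e. over $c-1$ further steps. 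So I would just track the sequence of allowed jumps and show that starting from $(d_0,d_1,d_2,d_3) = (0,2,3,d_3)$ with $d_3 \le 5$, one cannot reach $d_c = 2n \geq 12$ within $c \leq 6$ steps, forcing a contradiction; hence $c \geq 7$.

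Concretely, the cleanest route is the following bookkeeping. First handle $c = 5$: then $d_5 = 2n \geq 12$, and the maximal jump sequence is obtained by iterating $d_{i} - d_{i-1} \leq \tfrac12 (d_{i-1}-d_{i-2})(d_{i-1}+d_{i-2}-1)$. Starting from $d_1 - d_0 = 2$, $d_2 - d_1 = 1$, we get $d_3 - d_2 \leq \tfrac12 \cdot 1 \cdot (3+2-1) = 2$, so $d_3 \leq 5$; then $d_4 - d_3 \leq \tfrac12 (d_3 - d_2)(d_3 + d_2 - 1) \leq \tfrac12 \cdot 2 \cdot (5+3-1) = 7$, so $d_4 \leq 12$; and $d_5 - d_4 \leq \tfrac12 (d_4-d_3)(d_4+d_3-1)$. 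To be careful one should not just add up the extremal values independently, since the bound on a later jump depends on the actual earlier values; so I would argue that the function $(a,b) \mapsto \tfrac12(b-a)(b+a-1)$ giving the next increment is monotone in the relevant variables, hence the extremal sequence really is the one obtained by always taking equality. I then compare the resulting $d_5$ with the constraint $d_5 = 2n$ and with the symmetry forced by the duality: since $Z_i(L) = (L^{i+1})^\perp$, the jumps of the upper central series read off in one order coincide with the jumps of the lower central series read off in the reverse order, so the full jump sequence $(d_1 - d_0, d_2 - d_1, \dots, d_c - d_{c-1})$ must be a palindrome summing to $2n$. This palindrome condition, combined with $d_1 - d_0 = 2$, $d_2 - d_1 = 1$, is what actually kills the small classes.

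So the heart of the argument is: for $c \leq 6$, the jump sequence $(j_1, \dots, j_c)$ with $j_i = d_i - d_{i-1}$ satisfies (a) $j_1 = j_c = 2$, (b) $j_2 = j_{c-1} = 1$, (c) the recursive upper bound of Lemma 2.3, and (d) $\sum j_i = 2n \geq 12$. For $c = 4$ this forces $j_1 + j_2 + j_3 + j_4 = 2 + 1 + 1 + 2 = 6 < 12$, immediately impossible. For $c = 5$: $j_1 = j_5 = 2$, $j_2 = j_4 = 1$, $j_3 \leq \tfrac12 \cdot 1 \cdot (3+2-1) = 2$ (using $j_3 \le \tfrac12 j_2 (d_2 + d_1 - 1)$), so $\sum j_i \le 2+1+2+1+2 = 8 < 12$, again impossible. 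For $c = 6$: $j_1 = j_6 = 2$, $j_2 = j_5 = 1$, and $j_3, j_4$ are the two middle jumps with $j_3 \leq 2$ by the same computation, and then by palindromy $j_4 = j_3 \leq 2$ as well; so $\sum j_i \leq 2 + 1 + 2 + 2 + 1 + 2 = 10 < 12$. In every case $2n \geq 12$ is contradicted, so $c \geq 7$.

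The main obstacle I anticipate is making the ``always take equality gives the extremal sequence'' step fully rigorous, i.e. verifying the monotonicity of the increment map carefully enough that one is entitled to bound a later $j_i$ using the extremal values of the earlier $d_j$ rather than their actual values; a sloppy version of this can over- or under-estimate. A second, smaller subtlety is justifying the palindrome property and the boundary values $j_{c-1} = 1$, $j_c = 2$ at the \emph{top} of the series — these follow from $Z_i(L) = (L^{i+1})^\perp$ and from $\dim Z(L) = 2$, $\dim Z_2(L) = 3$ applied to $L$ versus to the ``reversed'' picture, but one should spell out why $\dim L^{c} - \dim L^{c+1} = 2$ and $\dim L^{c-1} - \dim L^{c} = 1$. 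Once those two points are nailed down, the arithmetic above is entirely routine and the bound $c \geq 7$ drops out.
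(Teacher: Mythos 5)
There is a genuine gap, and it sits exactly where you flagged a ``smaller subtlety'': the palindrome property is not a theorem, and the justification you give for it is a misreading of the duality. From $Z_i(L)=(L^{i+1})^\perp$ one gets $\dim Z_i(L)-\dim Z_{i-1}(L)=\dim L^{i}-\dim L^{i+1}$ with the \emph{same} index $i$ on both sides (this is precisely the paper's Remark 2.4): the jump sequences of the upper and lower central series coincide term by term in the same order, not in reverse order, so duality imposes no palindrome condition on $(j_1,\dots,j_c)$. The two boundary facts you actually need at the top, $j_c=2$ and $j_{c-1}=1$, are true, but for a different reason: $L^c\subseteq Z_1(L)$ and $\dim L^c\neq 1$ force $L^c=Z(L)$, and $L^{c-1}L=L^c\subseteq Z_1(L)$ gives $L^{c-1}\subseteq Z_2(L)$, which has dimension $3$, so $\dim L^{c-1}=3$. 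With those substitutions your cases $c\le 5$ go through (sum of jumps $\le 2+1+2+1+2=8<12$), and this is essentially the paper's own argument for $k\le 5$.

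The case $c=6$, however, collapses without the palindrome. There you need $j_4\le 2$, which you obtain only from $j_4=j_3$; the honest bound available from Lemma 2.3 is $j_4\le\tfrac12 j_3(d_3+d_2-1)\le\tfrac12\cdot 2\cdot 7=7$, giving $\sum j_i\le 2+1+2+7+1+2=15$, which does not contradict $2n=12$ or $14$. The missing ingredient is the one the paper uses: if $c=6$ then $(L^4,L^4)=(L^7,L)=0$, so $L^4$ is an isotropic (hence abelian) ideal and $\dim L^4\le n$, i.e.\ $j_4=\dim L^4-\dim L^5\le n-3$. Since the $2$-generation forces $\dim L^2=2n-2$, $\dim L^3=2n-3$ and $\dim L^4\ge 2n-5$ (equivalently, your bounds $j_1=2$, $j_2=1$, $j_3\le 2$), the two estimates on $\dim L^4$ give $2n-5\le n$, i.e.\ $n\le 5$, contradicting $2n\ge 12$. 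Without some such symplectic input beyond the jump inequality, class $6$ cannot be excluded by bookkeeping alone.
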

\begin{proof}
Notice that $Z(L)$ must be isotropic as otherwise we would have a $2$-dimensional symplectic subalgebra $I$ within $Z(L)$. This would imply that we have a direct sum $I \oplus I^\perp$ of two symplectic alternating algebras. As $I^\perp$ has non-trivial centre this would contradict the assumption that $Z(L)$ is $2$-dimensional. \\\\

\noindent
Now let $L=\mathbb{F}y_n+\mathbb{F}y_{n-1}+ L^2$. Then $$L^2=\mathbb{F}y_n y_{n-1}+L^3$$
It follows that $L^2=Z(L)^\perp$ is of dimension $2n-2$ and that $L^3$ is of dimension $2n-3$. Thus we can pick our standard basis such that

\begin{align*}
Z(L)&=\mathbb{F}x_n+\mathbb{F}x_{n-1},\\
L^2 =Z(L)^\perp &= \mathbb{F}x_n+  \mathbb{F}x_{n-1}+\cdots + \mathbb{F}y_{n-3}+\mathbb{F}y_{n-2},\\
L^3 =Z_2(L)^\perp&= \mathbb{F}x_n+ \mathbb{F}x_{n-1}+\cdots + \mathbb{F}y_{n-3}.
\end{align*}
Thus $L^3=\mathbb{F}y_n y_{n-1}y_n+\mathbb{F}y_n y_{n-1}y_{n-1}+L^4$ and hence
\begin{align}\label{eq-l4-gen}
\mbox{dim\,}\, L^4 \in \{2n-4,2n-5\}. 
 \end{align}

\noindent
In particular, $\mbox{dim\,} \, L^4 \geq 7$ for algebras of dimension $2n \geq 12$.
Now, let $k$ be the nilpotence class of $L$.
As $\mbox{dim\,}\ L^k \neq 1$ and $L^k \leq Z(L)$, we must have that
\begin{align}\label{eq-lk}
 L^k = Z(L).
 \end{align}
Moreover, we know that $\mbox{dim\,}\ L^s \neq 2$ for $1 \leq s \leq 4$ and thus 
\begin{align}
5 \leq k \leq 2n-3. 
\end{align}

\noindent
Notice that we can not have $k=5$ as otherwise $\mbox{dim\,} \ Z_2(L) - \mbox{dim\,} \ Z(L) = \mbox{dim\,} \ L^2 - \mbox{dim\,} \ L^3=1 $ and we get a contradiction that $L^4=Z_2(L)$ is of dimension $3$.\\\\

\noindent
It remains to see that the nilpotency class can not be $6$. To see this, we argue by contradiction and assume that $k=6$. 
Then from (\ref{eq-lk}) the following holds 
\begin{align*}
L^6 = Z(L) &\Longleftrightarrow Z_5 (L)=L^2,\\
L^5=Z_2(L) &\Longleftrightarrow Z_4(L)=L^3.
\end{align*}
We also have that $(L^4, L^4) = (L^7, L) =0$ and hence $L^4$ would be an isotropic (and thus abelian) ideal. Then
\begin{align}\label{l4-eq}
\mbox{dim\,} \, L^4 \leq n. 
\end{align}
It follows from equation (\ref{eq-l4-gen}) and (\ref{l4-eq}) that the algebras with the property that $\mbox{dim\,} \ Z(L) =2$ and class $6$ must be of dimension $10$.
This concludes the proof.
\end{proof}
\section{Main Theorem}
Our aim is to obtain the class for minimal algebras of dimension up to $16$. We show that this nilpotency class agrees with the lower bound given in Question B, that depends only on the dimension of the algebra, as follows:\\\\
Let $\Omega \colon {\mathbb N} \to {\mathbb N}$ be a function defined recursively by
$\Omega(0)=0$, $ \Omega (m+1) = 2 + {\Omega (m) \choose 2}$ and let $\Omega(m) < n \leq \Omega(m+1)$.
Then the class of nilpotent symplectic alternating algebras $L$ of rank $2$ and dimension $2n \geq 8$ over a field $\mathbb{F}$ is 
\begin{equation}\label{eq-main}
    Class\ (L)=
    \begin{cases}
      2m+1, & \text{if}\ \ \Omega(m) < n \leq \frac{\Omega(m)+\Omega(m+1)}{2}. \\
      2m+2, & \text{if}\ \ \frac{\Omega(m)+\Omega(m+1)}{2} < n \leq \Omega(m+1).
    \end{cases}
  \end{equation}
%
We thus prove the above Equation for algebras of dimension up to $16$.\\
\begin{Theorem} Let $L$ be any minimal symplectic alternating algebra that is of dimension $2n \leq 16$ over any field. Then $L$ has nitpotency class satisfying the conditions in Equation (\ref{eq-main}).\\
\end{Theorem}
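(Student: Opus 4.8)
The plan is to reduce the statement, one dimension at a time, to results already available: the classification of minimal algebras of dimension at most $10$ for the small cases, and, for the remaining dimensions, the lower‑bound lemma of Section 3 (Lemma 3.2) together with the explicit constructions of Section 2 (Proposition 2.5). First I would record that ``minimal'' means $L$ has rank $2$, equivalently $\dim Z(L)=2$, and that its class equals $f(n)=\min\{\mathrm{class}(L)\colon L\in\mathcal{N}^2_n(\mathbb{F})\}$; and that for $2n\le 16$ we have $n\in\{4,5,6,7,8\}$, with $\Omega(0)=0$, $\Omega(1)=2$, $\Omega(2)=3$, $\Omega(3)=5$, $\Omega(4)=12$. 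A short check of which interval of (\ref{eq-main}) each $n$ lands in then shows the target values: $n=4$ has $m=2$ and (first case, since $\Omega(2)=3<4\le 4=\tfrac{\Omega(2)+\Omega(3)}{2}$) predicted class $2m+1=5$; $n=5$ has $m=2$ and (second case, since $4<5\le 5=\Omega(3)$) predicted class $2m+2=6$; and each of $n\in\{6,7,8\}$ has $m=3$ and (first case, since $\Omega(3)=5<n\le 8<\tfrac{17}{2}=\tfrac{\Omega(3)+\Omega(4)}{2}$) predicted class $2m+1=7$. So the theorem amounts to $f(4)=5$, $f(5)=6$, $f(6)=f(7)=f(8)=7$, each value being attained by every minimal algebra of that dimension.

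For $2n\in\{8,10\}$ I would invoke the classification lemma directly: a minimal algebra of dimension at most $10$ has one of the presentations $\mathcal{P}_8^{(2,1)}(r)$, $\mathcal{P}_{10}^{(2,1)}$, $\mathcal{P}_{10}^{(2,2)}(r)$, and computing the lower central series of each (routine, and carried out in the cited references) shows $\mathcal{P}_8^{(2,1)}(r)$ has class $5$ while both dimension‑$10$ presentations have class $6$ — exactly the predicted values. For $2n\in\{12,14,16\}$ I would argue by a lower and an upper bound. Since a minimal $L$ has $\dim Z(L)=2$ and dimension $\ge 12$, Lemma 3.2 gives $\mathrm{class}(L)\ge 7$. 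For the reverse inequality it suffices to produce, for each $n\in\{6,7,8\}$, one rank‑$2$ algebra of dimension $2n$ of class exactly $7$; since these $n$ all satisfy $\Omega(3)<n\le\tfrac{\Omega(3)+\Omega(4)}{2}$ with $m=3$, Proposition 2.5 supplies a nilpotent SAA of rank $2$ and dimension $2n$ of class $2m+1=7$. Hence $f(n)=7$ for $n\in\{6,7,8\}$, so every minimal $L$ of dimension $2n\in\{12,14,16\}$ has class precisely $7$, matching (\ref{eq-main}). Assembling the cases proves the theorem.

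The substance of the argument lies entirely in the two imported ingredients: the lower bound of Section 3 — whose proof is the genuinely delicate step, squeezing the central‑series dimension inequality (with $L^4$ forced to be abelian when the class is $6$, hence $\dim L^4\le n$, against $\dim L^4\in\{2n-4,2n-5\}\ge 7$ when $2n\ge 12$) into a contradiction unless $2n=10$ — and the Section 2 constructions. What is left is mostly bookkeeping: the $\Omega$‑arithmetic above and, for $2n\in\{8,10\}$, reading off the nilpotence class from the classifying presentations. The one point of the bookkeeping that deserves care is the boundary dimension $2n=16$: because $n=8\le\tfrac{17}{2}=\tfrac{\Omega(3)+\Omega(4)}{2}$, it still lies in the first case of (\ref{eq-main}), so the class is $7$ and not $8$; already $2n=18$ ($n=9>8.5$) would fall in the second case, which is precisely why the theorem is restricted to dimension at most $16$.
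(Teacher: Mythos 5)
Your proof is correct and follows essentially the same route as the paper: the $\Omega$-arithmetic identifying the predicted classes $5,6,7,7,7$ for $n=4,\ldots,8$, Lemma 3.1 for dimensions $8$ and $10$, and Lemma 3.2 for the lower bound of $7$ in dimensions $12$--$16$. The only difference is that for the upper bound in dimensions $12$, $14$, $16$ you cite the general construction of Proposition 2.5, whereas the paper exhibits three explicit presentations $\mathcal{P}_{12}^{(2,1)}$, $\mathcal{P}_{14}^{(2,1)}$, $\mathcal{P}_{16}^{(2,1)}$ and checks by inspection that each has class $7$; both are legitimate sources for the required witness algebras, so this is a cosmetic rather than a substantive divergence.
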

\begin{proof} As the center of the algebra $L$ is of dimension $2$, $L$ must be of dimension $2n \geq 8$.
From Lemma $3.1$, we see that minimal algebras of dimension $8$ have class $2 \cdot 2 + 1 =5$ and the minimal algebras of dimension $10$ have class $2 \cdot 2 + 2 =6$. This takes care of 
all algebras of dimension $2n \leq 10$. It remains to consider algebras of dimension greater than or equal to $12$. We show that any algebra $M$ of dimension $2n \leq 16$ and rank $2$, has class $M \geq 2m+1=7$.
From Lemma $3.2$, there is no such algebra with dimension $Z(L) =2$ and nilpotency class less than or equal to $6$.\\\\
%
Turning to the algebra of dimension $12$: inspection shows that the following algebra
\begin{align*}
{\mathcal P}_{12}^{(2,1)}:\ \  (x_{4}y_{5},y_{6}) &=1,\ (x_{3}y_{4},y_{6})=1,\\
(x_{2}y_{4},y_{5}) &=1,\ (x_{1}y_{2},y_{4})  =1,\ (y_{1}y_{2},y_{3})=1,
\end{align*}
is of dimension $12$, $\mbox{dim\,}\ Z(L) =2$ and class $7=2 \cdot 3 +1$. \\\\
Next, consider the following algebra
\begin{align*}
{\mathcal P}_{14}^{(2,1)}:\ \  (x_{5}y_{6},y_{7}) &=1,\ (x_{4}y_{5},y_{6})=1,\   (x_{3}y_{5},y_{7})=1,\\
 (x_{2}y_{3},y_{5}) &=1,\ (x_{1}y_{3},y_{6})=1,\ (y_{1}y_{4},y_{5})=1, \ (y_{2}y_{4},y_{6})=1.
\end{align*}
Inspection shows that it is a nilpotent algebra of dimension $14$ and class $7$ as required.
Finally, inspection shows that the following presentation 
\begin{align*}
{\mathcal P}_{16}^{(2,1)}:\ \  (x_{6}y_{7},y_{8}) &=1,\ (x_{5}y_{6},y_{8})=1,\  \\ 
(x_{4}y_{6},y_{7}) & =1, \ (x_{3}y_{5},y_{8})=1,\ (x_{2}y_{5},y_{7})=1,\ \\
(x_{1}y_{5},y_{6}) & =1,\ (y_{1}y_{4},y_{8})=1, \ (y_{2}y_{4},y_{7})=1 , \ (y_{3}y_{4},y_{6})=1,
\end{align*}
\noindent
has nilpotency class $7$, which settle the case for algebras of dimension $16$.
Therefore, (\ref{eq-main}) holds and the result follows. 
\end{proof}
\section*{Acknowledgments}
Part of this work has been done while the first author was visiting at Saint Louis University and she is grateful for the great hospitality of the Department of Mathematics and Statistics during the visit, which was supported by the department and AMS-Simons Travel Grants. First and third author also received funding by the Southern Illinois University.\\
\bibliographystyle{plainurl}
\bibliography{biblio}
\end{document}